\newcommand{\R}{\mathbb{R}}
\newcommand{\N}{\mathbb{N}}
\newcommand{\F}{\mathcal{F}}
\newcommand{\llfloor}{\left\lfloor}
\newcommand{\rrfloor}{\right\rfloor}
\newcommand{\indi}[1]{\mathbbm{1}_{#1}}
\newtheorem{theorem}{Theorem} 
\newtheorem{lemma}[theorem]{Lemma}
\title{\bf A simple proof for the convexity of the Choquet integral}
\author{Aur\'elien Alfonsi\thanks{Universit\'e Paris-Est, CERMICS, Projet MathRisk
    ENPC-INRIA-UMLV, 6 et 8 avenue Blaise Pascal, 77455 Marne La Vall\'ee, Cedex
    2, France, e-mail : alfonsi@cermics.enpc.fr. This research benefited
    from the support of the ``Chaire Risques Financiers'', Fondation du
    Risque.
}}
\begin{document}
\maketitle 

{\abstract  This note presents an elementary and direct proof for the convexity of the Choquet integral when the corresponding set function is submodular.
}

{\noindent {\it Keywords: } \it Choquet integral, submodular set functions, Lov\'asz extension.\\
  {\it AMS Classification (2010): 28E10}  \\
}

Let $(\Omega, \F)$ be a measurable space and $\mathcal{X}$ be the set of functions $X: \Omega \rightarrow \R$ that are bounded and measurable with respect to the Borel $\sigma$-algebra. For $X \in \mathcal{X}$, we denote by $\|X\|=\sup_{\omega \in \Omega} |X(\omega)|$ the sup norm. We consider a set function $c: \F \rightarrow [0,1]$ that is 
\begin{itemize}
\item {\it monotone}: $\forall A, B \in \F, \ A \subset B \implies c(A) \le c(B)$, 
\item {\it normalized}: $c(\emptyset)=0$ and $c(\Omega)=1$.
\end{itemize}
The set function $c$ is said to be {\it submodular } if we have in addition 
\begin{equation} \label{def_submod}
 \forall A, B \in \F, \  c(A\cup B)+c(A\cap B) \le c(A)+c(B).
\end{equation}
For $X \in  \mathcal{X}$, the Choquet integral of $X$ with respect to $c$ is defined by
\begin{equation}
\int X dc := \int_{-\infty}^0 [c(X>x)-1]dx+ \int_0^\infty c(X>x) dx,
\end{equation}
where $c(X>x)$ is the short notation for $c(\{ \omega \in \Omega , X(\omega)>x \})$. This integral has been introduced by Choquet in its theory of capacities~\cite{Choquet} and is now used in various fields such as decision theory (see e.g. Schmeidler~\cite{Schmeidler} and Gilboa~\cite{Gilboa}) and risk measures in finance (see e.g. F\"ollmer and Schied~\cite{FS}). When $\Omega$ is finite and $\F=\mathcal{P}(\Omega)$ is the power set of~$\Omega$, the Choquet integral is also known as the Lov\'asz extension~\cite{Lovasz} and is related to optimization problems arising in machine learning or operational research, see the recent tutorial of Bach~\cite{Bach} and references within. 

It is easy to check that the Choquet integral satisfies the following properties, see Example~4.14 in~\cite{FS} :
\begin{itemize}
\item translation invariance: $\forall X \in \mathcal{X},m \in \R, \ \int (X+m) dc=\int X dc +m$,
\item monotonicity: $\forall X,Y \in \mathcal{X}, \ X\le Y \implies \int X dc \le \int Y dc$,
\item positive homogeneity: $\forall X \in \mathcal{X},\lambda>0, \ \int (\lambda X) dc= \lambda \int X dc$. 
\end{itemize}
The goal of this paper is to give an elementary proof of the following theorem.
\begin{theorem}
The set function $c$ is submodular if, and only if the Choquet integral is convex, i.e. 
$$\forall X,Y \in \mathcal{X},\forall \lambda \in [0,1], \  \int (\lambda X +(1-\lambda)Y ) dc \le  \lambda \int X dc +(1-\lambda) \int Y dc. $$
\end{theorem}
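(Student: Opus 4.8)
The plan is to prove the two implications separately and to reduce everything to a single additive inequality. First I would record that, thanks to positive homogeneity, convexity of the Choquet integral is \emph{equivalent} to its subadditivity $\int(X+Y)\,dc\le\int X\,dc+\int Y\,dc$: given subadditivity and $\lambda\in(0,1)$ one applies it to $\lambda X$ and $(1-\lambda)Y$ and then uses homogeneity, while conversely convexity applied to $2X,2Y$ at $\lambda=\half$ together with homogeneity yields subadditivity (the endpoints $\lambda\in\{0,1\}$ being trivial). This immediately disposes of the easy implication: assuming convexity, apply subadditivity to $X=\indi{A}$ and $Y=\indi{B}$. Since $\int\indi{A}\,dc=c(A)$ and, because $A\cap B\subset A\cup B$, a direct computation from the definition gives $\int(\indi{A}+\indi{B})\,dc=\int(\indi{A\cup B}+\indi{A\cap B})\,dc=c(A\cup B)+c(A\cap B)$, subadditivity becomes exactly the submodularity inequality~\eqref{def_submod}.

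For the converse I would aim at the single key estimate
\begin{equation*}
\int(X+\mu\,\indi{A})\,dc\ \le\ \int X\,dc+\mu\,c(A),\qquad \mu\ge 0,\ A\in\F,
\end{equation*}
valid for every bounded measurable $X$. Writing $S_x=\{X>x\}$, the super-level sets of $X+\mu\,\indi{A}$ are $T_x=S_x\cup(S_{x-\mu}\cap A)$, so that the difference of the two Choquet integrals equals $\int_{\R}[c(T_x)-c(S_x)]\,dx$, a genuine integral since the integrand has bounded support and is nonnegative (as $T_x\supset S_x$). The crucial step is to apply submodularity pointwise in $x$ to the pair $S_x$ and $S_{x-\mu}\cap A$: since $S_x\subset S_{x-\mu}$ one has $S_x\cup(S_{x-\mu}\cap A)=T_x$ and $S_x\cap(S_{x-\mu}\cap A)=S_x\cap A$, whence $c(T_x)-c(S_x)\le h(x-\mu)-h(x)$ with $h(x):=c(S_x\cap A)$.

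Because $X$ is bounded, $h$ is non-increasing and is exactly equal to $c(A)$ for $x$ below $\inf X$ and to $0$ for $x$ above $\sup X$, so no continuity assumption on $c$ is needed; a Fubini/shift computation then gives $\int_{\R}[h(x-\mu)-h(x)]\,dx=\mu\,c(A)$ (the total drop of $h$ is $c(A)$, and shifting by $\mu$ contributes a factor $\mu$), which proves the estimate. From here the conclusion is routine: decomposing a simple function $Y$ along the increasing chain of its level sets as $Y=\min Y+\sum_j\mu_j\,\indi{C_j}$ with $\mu_j>0$, so that $\int Y\,dc=\min Y+\sum_j\mu_j\,c(C_j)$, I would add the indicators one at a time, using translation invariance for the constant and the key estimate for each term; telescoping yields $\int(X+Y)\,dc\le\int X\,dc+\int Y\,dc$ for every bounded $X$ and every simple $Y$. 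The general case follows by approximating $Y$ uniformly by simple functions and passing to the limit, using that monotonicity and translation invariance make the Choquet integral $1$-Lipschitz for the sup norm.

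The main obstacle is the key estimate, and within it the choice of the right pair of sets, namely $S_x$ and $S_{x-\mu}\cap A$, on which to invoke submodularity, together with the shift identity for $h$. Once this is in place, all the reductions — homogeneity, telescoping along the level-set chain, and uniform approximation — are elementary.
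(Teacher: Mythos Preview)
Your argument is correct but follows a genuinely different route from the paper's. Both proofs open the same way: reduce convexity to subadditivity via positive homogeneity, and dispatch the easy implication with $X=\indi{A}$, $Y=\indi{B}$. For the hard implication, you isolate the one-step estimate $\int(X+\mu\,\indi{A})\,dc\le\int X\,dc+\mu\,c(A)$ by applying submodularity pointwise in $x$ to the pair $S_x$ and $S_{x-\mu}\cap A$, then telescope along the nested level-set decomposition of a simple $Y$ and pass to the limit in sup norm. The paper instead discretizes to $X,Y:\Omega\to\N$ and proves a combinatorial lemma (Lemma~\ref{lem}) identifying $\{X+Y\ge 2k+1\}$ and $\{X+Y\ge 2k+2\}$ as the union and intersection of two explicit sets $A_k,B_k$; one application of submodularity per $k$ then yields
\[
\int(X+Y)\,dc\ \le\ \int\Bigl(\Bigl\lfloor\tfrac{X}{2}\Bigr\rfloor+\Bigl\lfloor\tfrac{Y+1}{2}\Bigr\rfloor\Bigr)dc+\int\Bigl(\Bigl\lfloor\tfrac{X+1}{2}\Bigr\rfloor+\Bigl\lfloor\tfrac{Y}{2}\Bigr\rfloor\Bigr)dc,
\]
which closes by induction on the range $\{0,\dots,2^p\}$. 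Your level-set/telescoping approach is closer in spirit to classical arguments (e.g.\ the Le~Cam--Buja proof the paper cites) and has the merit of producing a clean, reusable intermediate lemma; the paper's contribution is precisely the halving induction and Lemma~\ref{lem}, which replace both the layer-by-layer telescoping and the final sup-norm approximation by a purely combinatorial mechanism.
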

\noindent Thanks to the positive homogeneity, it is equivalent to show that   $c$ is submodular if, and only if the Choquet integral is {\it subadditive}, i.e. 
\begin{equation}\label{subadditive}
\forall X,Y \in \mathcal{X}, \  \int ( X + Y ) dc \le   \int X dc + \int Y dc. 
\end{equation}
If~\eqref{subadditive} holds, it is obvious that $c$ is submodular by taking $X=\indi{A}$ and $Y=\indi{B}$. The converse implication is instead more difficult to prove. Since the work of Choquet in~1954 (see \S54 in~\cite{Choquet}), different proofs of this result have been proposed. In Chapter 6 of~\cite{Denneberg}, Denneberg gives a proof together with a list of the different ones. Most of them require quite involved arguments, while the one of Le Cam and Buja~\cite{Buja} (see also Kindler~\cite{Kindler}) relies on a direct application of the submodular property. The proof that is proposed in the present paper also applies directly the  submodular property, but in a different way.  It can be interesting in particular for pedagogical purposes\footnote{This proof has been found while preparing the lecture on risk measures for the master students in mathematical finance of the universities Pierre et Marie Curie and Paris-Est Marne-La-Vall\'ee.}.

Let us consider then a submodular set function~$c$. Thanks to the translation invariance, we have 
 $\int ( X + Y ) dc=\int ( (X +\|X\|) + (Y+ \|Y\|))dc-\|X\|-\|Y\|$. It is therefore sufficient to prove~\eqref{subadditive} for $X\ge 0$ and $Y\ge 0$. Let $\lfloor x \rfloor$ denote the integer part of $x \in \R$. For $n \in \N^*$, we have $x-\frac{1}{n} \le \frac{\lfloor nx \rfloor}{n}\le x$, which gives by monotonicity, translation invariance and positive homogeneity
\begin{align*}
&\lim_{n\rightarrow+ \infty} \frac{1}{n} \int (\lfloor nX \rfloor )dc =\int X dc, \ \lim_{n\rightarrow+ \infty} \frac{1}{n} \int (\lfloor nY \rfloor )dc =\int Y dc,\\
& \lim_{n\rightarrow+ \infty} \frac{1}{n} \int (\lfloor nX \rfloor +\lfloor nY \rfloor  )dc =\int (X+Y) dc.
\end{align*}
Thus, it is sufficient to prove~\eqref{subadditive} for $X,Y:\Omega \rightarrow \N$. In this case, we have 
$$\int X dc =\sum_{k=1}^\infty\int_k^{k+1}c(X>x)dx=\sum_{k=1}^\infty c(X\ge k).$$
Therefore, we have $\int (X+Y) dc=\sum_{k=0}^\infty c(X+Y \ge 2k+1) + c(X+Y \ge 2k+2).$ For $k\ge 0$, we define
$$ A_k= \cup_{i=0}^{k+1} \{X \ge 2i,Y \ge 2(k-i)+1\}, \ B_k=\cup_{i=-1}^{k} \{X \ge 2i+1,Y \ge 2(k-i)\}. $$
By Lemma~\ref{lem}, we have $A_k\cup B_k=\{X+Y \ge 2k+1\} $ and  $A_k\cap B_k=\{X+Y \ge 2k+2\} $. By using the submodularity, we get
\begin{align}
&\int (X+Y) dc \nonumber\\
&\le  \sum_{k=0}^\infty c\left( \bigcup_{i=0}^{k+1} \{X \ge 2i,Y+1 \ge 2(k-i)+2\}\right) + c\left( \bigcup_{i=-1}^{k} \{X+1 \ge 2i+2,Y \ge 2(k-i)\} \right) \nonumber \\
&=\sum_{k=0}^\infty c\left( \bigcup_{i=0}^{k+1} \left\{ \llfloor \frac{ X}{2} \rrfloor \ge i,\llfloor \frac{Y+1}{2}\rrfloor  \ge k+1-i \right\}\right) + c\left( \bigcup_{i=-1}^{k} \left\{ \llfloor \frac{X+1}{2} \rrfloor \ge i+1,\llfloor \frac{Y}{2}\rrfloor \ge k-i \right\} \right) \nonumber \\
&=\sum_{k=0}^\infty c\left(\llfloor \frac{X}{2} \rrfloor+\llfloor \frac{Y+1}{2} \rrfloor \ge k+1 \right)  +c\left(\llfloor \frac{X+1}{2} \rrfloor+\llfloor \frac{Y}{2}\rrfloor \ge k+1 \right) \nonumber \\
&=\int \left(\llfloor \frac{X}{2} \rrfloor+\llfloor \frac{Y+1}{2} \rrfloor \right)dc + \int\left(\llfloor \frac{X+1}{2} \rrfloor+\llfloor \frac{Y}{2}\rrfloor\right)dc. \label{calcul_intermed}
\end{align}
We now proceed by induction and show that~\eqref{subadditive} holds when $X,Y:\Omega \rightarrow \{0,\dots,2^p\}$. For $p=0$, the submodularity gives
$\int ( X + Y ) dc=c(\{X=1\}\cup \{Y=1\})+c(\{X=1\}\cap\{Y=1\})\le c(X=1)+c(Y=1)=\int X dc + \int Y  dc$   and therefore~\eqref{subadditive} is true. Let us consider now $X,Y:\Omega \rightarrow \{0,\dots,2^{p+1}\}$. Then, $\lfloor \frac{X}{2} \rfloor$, $\lfloor \frac{Y+1}{2} \rfloor$,  $\lfloor \frac{X+1}{2} \rfloor$ and $\lfloor \frac{Y}{2}\rfloor$ take values in $\{0,\dots,2^p\}$. By using~\eqref{calcul_intermed} and the induction hypothesis, we deduce
\begin{equation*}
\int (X+Y) dc  \le \int \llfloor \frac{X}{2} \rrfloor dc+ \int \llfloor \frac{X+1}{2} \rrfloor dc +\int \llfloor \frac{Y}{2} \rrfloor dc + \int \llfloor \frac{Y+1}{2} \rrfloor dc.
\end{equation*}
It remains to observe that 
\begin{align*} \int \llfloor \frac{X}{2} \rrfloor dc+ \int \llfloor \frac{X+1}{2} \rrfloor dc&= \sum_{k=1}^\infty c\left(\frac{X}{2}\ge k\right)+c\left(\frac{X+1}{2}\ge k\right)\\&=\sum_{k=1}^\infty c(X\ge 2 k)+c(X\ge 2k-1)= \int X dc
\end{align*}
to conclude the proof. 
\vspace{0.5cm}

 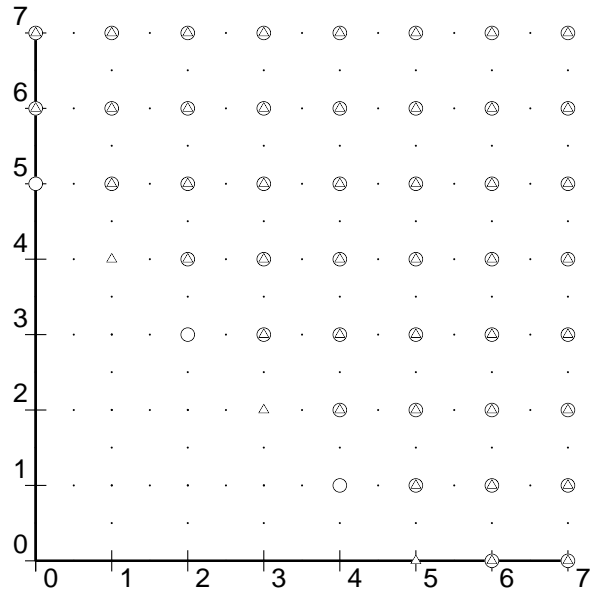
\begin{figure}[h]
    \center
    \begin{pspicture}(0,0)(7,7)      
      \psaxes[labels=none]{-}(0,0)(0,7)(7,0)      
      \psgrid[subgriddiv=1,griddots=2](0,0)(0,0)(7,7)

      \psline[linecolor=black,linestyle=none,dotstyle=o,dotscale=1.5,showpoints=true](0,7)(1,7)(2,7)(3,7)(4,7)(5,7)(6,7)(7,7)(0,6)(1,6)(2,6)(3,6)(4,6)(5,6)(6,6)(7,6)(0,5)(1,5)(2,5)(3,5)(4,5)(5,5)(6,5)(7,5)(2,4)(3,4)(4,4)(5,4)(6,4)(7,4)(2,3)(3,3)(4,3)(5,3)(6,3)(7,3)(4,2)(5,2)(6,2)(7,2)(4,1)(5,1)(6,1)(7,1)(6,0)(7,0)

      \psline[linecolor=black,linestyle=none,dotstyle=triangle,dotscale=1,showpoints=true](0,7)(1,7)(2,7)(3,7)(4,7)(5,7)(6,7)(7,7)(0,6)(1,6)(2,6)(3,6)(4,6)(5,6)(6,6)(7,6)(1,5)(2,5)(3,5)(4,5)(5,5)(6,5)(7,5)(1,4)(2,4)(3,4)(4,4)(5,4)(6,4)(7,4)(3,3)(4,3)(5,3)(6,3)(7,3)(3,2)(4,2)(5,2)(6,2)(7,2)(5,1)(6,1)(7,1)(5,0)(6,0)(7,0)

\end{pspicture}
\caption{The sets $\tilde{A}_k$ (circles) and $\tilde{B}_k$ (triangles) for $k=2$.}\label{fig1}
\end{figure}
\begin{lemma}\label{lem}
For $k \ge 0 $, we define $\tilde{A}_k=\cup_{i=0}^{k+1} \{(x,y) \in \N^2, x \ge 2i, y \ge 2(k-i)+1\}$ and $\tilde{B}_k=\cup_{i=-1}^{k} \{(x,y) \in \N^2, x \ge 2i+1,y \ge 2(k-i)\}$. We have 
$$\tilde{A}_k\cup \tilde{B}_k=\{(x,y) \in \N^2, x +y \ge 2k+1\}, \ \tilde{A}_k\cap \tilde{B}_k=\{(x,y) \in \N^2, x +y \ge 2k+2 \}. $$
\end{lemma}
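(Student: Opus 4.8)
The plan is to reduce both set identities to a single elementary statement about a pair of integers whose sum is $x+y$ and which differ by at most one. First I would remove the union index from the definitions of $\tilde A_k$ and $\tilde B_k$. For an integer $x$ one has $x \ge 2i \iff \lfloor x/2\rfloor \ge i$ and $x \ge 2i+1 \iff \lfloor (x+1)/2\rfloor \ge i+1$, and similarly for $y$; hence the quarter-plane indexed by $i$ in $\tilde A_k$ is $\{\lfloor x/2\rfloor \ge i,\ \lfloor (y+1)/2\rfloor \ge k+1-i\}$, and the one in $\tilde B_k$ is, after setting $j=i+1$, $\{\lfloor (x+1)/2\rfloor \ge j,\ \lfloor y/2\rfloor \ge k+1-j\}$. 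Both unions then have exactly the same shape, namely $\bigcup_{i=0}^{k+1}\{a \ge i,\ b \ge k+1-i\}$ for suitable $a,b \ge 0$.

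The key step is to show that such a union collapses to a single inequality:
\[ \bigcup_{i=0}^{k+1}\{a \ge i,\ b \ge k+1-i\} = \{a+b \ge k+1\}, \qquad a,b \in \N. \]
The inclusion ``$\subseteq$'' is immediate since $a \ge i$ and $b \ge k+1-i$ add up to $a+b \ge k+1$. For ``$\supseteq$'', given $a+b \ge k+1$ I would exhibit the index $i=\max(0,k+1-b)$ and check $0 \le i \le k+1$ together with $i \le a$ and $k+1-i \le b$; here the hypotheses $a,b \ge 0$ (which hold because $x,y \in \N$) are exactly what guarantees that the two boundary indices $i=k+1$ in $\tilde A_k$ and $i=-1$ in $\tilde B_k$ cause no exception. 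This yields the characterizations $(x,y)\in\tilde A_k \iff \alpha \ge k+1$ and $(x,y)\in\tilde B_k \iff \beta \ge k+1$, where $\alpha=\lfloor x/2\rfloor+\lfloor (y+1)/2\rfloor$ and $\beta=\lfloor (x+1)/2\rfloor+\lfloor y/2\rfloor$.

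It then remains to compare $\alpha$ and $\beta$. The identities $\lfloor x/2\rfloor+\lfloor (x+1)/2\rfloor=x$ and $\lfloor y/2\rfloor+\lfloor (y+1)/2\rfloor=y$ give $\alpha+\beta=x+y$, while a parity check gives $\alpha-\beta=\indi{y\text{ odd}}-\indi{x\text{ odd}}$, so $\alpha$ and $\beta$ are integers with $|\alpha-\beta|\le 1$. Writing $M=\max(\alpha,\beta)$ and $m=\min(\alpha,\beta)$, one has $M+m=x+y$ and $M-m\in\{0,1\}$, whence $M \ge k+1 \iff x+y \ge 2k+1$ and $m \ge k+1 \iff x+y \ge 2k+2$. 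Since $\tilde A_k\cup\tilde B_k=\{M\ge k+1\}$ and $\tilde A_k\cap\tilde B_k=\{m\ge k+1\}$, this is precisely the claim. I expect the only real obstacle to be the bookkeeping in the collapsing step—verifying that the chosen index stays in $\{0,\dots,k+1\}$ at the two extreme values of $i$—after which the parity computation and the max/min equivalences are routine.
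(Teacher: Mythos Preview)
Your proof is correct and takes a genuinely different route from the paper's. The paper argues by direct case analysis: for the union it observes that $\{x+y\ge 2k+1\}=\bigcup_{i=0}^{2k+1}\{x\ge i,\,y\ge 2k+1-i\}$ and splits this index set by the parity of $i$; for the intersection it checks separately that $x+y\le 2k$ excludes both sets and that when $x+y=2k+1$ one of $\tilde A_k,\tilde B_k$ fails according to the parity of~$x$. You instead collapse each of $\tilde A_k,\tilde B_k$ to a single half-space $\{\alpha\ge k+1\}$, $\{\beta\ge k+1\}$ via the floor substitutions, and then read off both identities simultaneously from $\alpha+\beta=x+y$ and $|\alpha-\beta|\le 1$. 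Your route is slightly longer but more structural, and it has the pleasant side effect of establishing the set-level identities $\tilde A_k=\{\lfloor x/2\rfloor+\lfloor(y+1)/2\rfloor\ge k+1\}$ and $\tilde B_k=\{\lfloor(x+1)/2\rfloor+\lfloor y/2\rfloor\ge k+1\}$, which are precisely what the paper uses immediately afterwards in deriving~\eqref{calcul_intermed}; in that sense your proof of the lemma dovetails with the main argument more tightly than the paper's own case check does.
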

\begin{proof}
This property is easier to visualize, see Figure~\ref{fig1}, but we give here a formal proof for sake of completeness. First, we notice that $x+y\ge 2k+1$ if and only if there is $i\in \{0,\dots,2k+1\}$ such that $x\ge i$ and $y\ge 2k+1-i$, which gives $\tilde{A}_k\cup \tilde{B}_k=\{(x,y) \in \N^2, x +y \ge 2k+1\}$. Similarly, if $x+y\ge 2k+2$ there is $i\in \{0,\dots,2k+2\}$ such that $x\ge i$ and $y\ge 2k+2-i$. Since $i-1\le 2 \lfloor \frac{i}{2} \rfloor \le i$ (resp.  $i-1\le 2 \lfloor \frac{i-1}{2} \rfloor +1 \le i$), this gives  $x\ge 2 \lfloor \frac{i}{2}\rfloor $ and $y\ge 2(k-\lfloor \frac{i}{2} \rfloor)+1$ (resp. $x\ge  2 \lfloor \frac{i-1}{2} \rfloor +1$ and $y\ge 2(k-\lfloor \frac{i-1}{2} \rfloor)$) and thus $(x,y) \in  \tilde{A}_k\cap \tilde{B}_k$. Conversely, if $x+y\le 2k$, we have $(x,y)\not \in \tilde{A}_k$ and $(x,y)\not \in \tilde{B}_k$ and if $x+y=2k+1$, $(x,y) \not \in \tilde{A}_k$ if $x$ is odd and $(x,y) \not \in \tilde{B}_k$ if $x$ is even.
\end{proof}

\bibliographystyle{plain}
\bibliography{bibnote}

\end{document}